\newcommand{\R}{\ensuremath{\mathbb{R}}}
\newcommand{\CS}{\ensuremath{\mathcal{S}}}
\newcommand{\CF}{\ensuremath{\mathcal{F}}}
\newcommand{\CN}{\ensuremath{\mathcal{N}}}
\newcommand{\CV}{\ensuremath{\mathcal{V}}}
\newcommand{\ov}{\overline}
\newcommand{\al}{\alpha}
\newcommand{\torus}{\ensuremath{\mathbb{T}}}
\newcommand{\sgn}{\mathrm{sign}}
\newcommand{\supp}{\mathrm{supp}}
\newcommand{\sat}{\mathrm{Sat}}
\def\p{\partial}
\newtheorem {theorem} {Theorem}
\newtheorem {proposition} {Proposition}
\newtheorem {remark} {Remark}
\newtheorem {mcor} {Corollary}
\newtheorem{main}{Theorem}
\begin{document}
\renewcommand{\arraystretch}{1.5}

\title[Invariant measures for Filippov systems]
{A note on invariant measures\\ for Filippov systems}

\author[Novaes \& Var\~ao ]
{Douglas D. Novaes and R\'egis Var\~{a}o}

\address{ Departamento de Matem\'{a}tica, Universidade
Estadual de Campinas, Rua S\'{e}rgio Buarque de Holanda, 651, Cidade Universit\'{a}ria Zeferino Vaz, 13083--859, Campinas, SP,
Brazil} \email{ddnovaes@unicamp.br} \email{varao@unicamp.br}

\subjclass[2010]{34A36, 34A60, 37L40, 34C28}

\keywords{Filippov systems, discontinuous differential equations, piecewise smooth vector fields, differential inclusions, invariant measures}

\maketitle

\begin{abstract}
We are interested in Filippov systems which preserve a probability measure on a compact manifold. We define a measure to be invariant for a Filippov system as the natural analogous definition of invariant measure for flows. Our main result concerns Filippov systems which preserve a probability measure equivalent to the volume measure. As a consequence, the volume preserving Filippov systems are the refractive piecewise volume preserving ones. We conjecture that if a Filippov system admits an invariant probability measure, this measure does not see the trajectories where there is a break of uniqueness. We prove this conjecture for Lipschitz differential inclusions. Then, in light of our previous results, we analyze the existence of invariant measures for many examples of Filippov systems defined on compact manifolds.
\end{abstract}

\section{Introduction}

Filippov systems belong to a class of dynamical systems which are very useful to model many physical systems. The understanding of their chaotic behavior is an active area of research in dynamical systems (one may see \cite{BF1,BF2,NPV}, and the references therein).  A better comprehension of  the chaoticity of a dynamical systems can be achieved through the ergodic theory point of view. Ergodic theory deals with dynamical systems admitting an invariant measure, hence on ergodic theory one may talk about statistical properties of the dynamics. In this work we try to understand the invariant measures of Filippov systems defined on a compact manifold $M$, which are locally described by
\[
Z(p)=(F^+,F^-)_{h}:=\left\{\begin{array}{l}
F^+(p),\quad\textrm{if}\quad h(p)>0,\vspace{0.1cm}\\
F^-(p),\quad\textrm{if}\quad h(p)<0,
\end{array}\right. \quad \text{for}\quad p\in D,
\]
where $D$ is an open subset of $M$, $F^{\pm}$ are smooth vector fields on $D$, and $h:D\rightarrow\R$ is a smooth function having $0$ as a regular value  (see Section \S \ref{sec:filippov.systems}). We stress that the understanding of the invariant measures of Filippov systems is the very first step if one desires to use ergodic theory to study these systems.

In general, the Filippov solution of a discontinuous differential system passing through a point is not unique. This implies that their solutions, in general, do not enjoy the flow properties. This adds an extra difficulty when studying invariant measures. 

Hamiltonian flows preserve the volume measure, that is if $Z_t$ represents the  Hamiltonian flow, then $Vol(A) = Vol(Z_t(A))$, for any Borel set $A$. In fact if $Z_t$ is any flow and $\mu$ is probability we say that $Z_t$ preserves the probability $\mu$ if $\mu(A) = \mu(Z_t(A))$. We define analogously for the case we have a Fillipov systems. Where $Z_t(A)$ has the same usual meaning, but here it consider all the possible trajectories for the Filippov system. Full details of definitions may be found on Section \S \ref{prem}.

Our main result deals with Fillipov systems which preserves a measure equivalent to the volume measure.

\begin{main}\label{Tvol}
For $\al^{\pm}>0,$ let $f:M\rightarrow (0,\infty)$ be a piecewise constant function defined as $\alpha^{\pm}$ if $h(p)\gtrless 0$. The Filippov system $Z=(F^+,F^-)_{h}$ preserves $\nu=f\cdot \lambda$ if, and only if,  the vector fields $F^{\pm}$ preserve the measures $\nu^{\pm}=\alpha^{\pm}\cdot \lambda$ on $\Sigma^{\pm}$  and $\alpha^+F^+h(p)=\alpha^-F^-h(p)$ for every $p\in\Sigma$.
\end{main}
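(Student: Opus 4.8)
The strategy is that both conditions in the statement are local (the first inside each $C_i$, the second along $\Sigma$) and so is invariance of $\nu$, so it suffices to establish the equivalence in suitable charts. Since $f$ is bounded away from $0$ and $\infty$, $\nu$ and $\lambda$ are mutually absolutely continuous. In the ``only if'' direction this lets us invoke Theorem \ref{Tprob.di}: if $Z$ preserves $\nu$ there is an open set $A\supseteq\sat(\CN_Z)$ with $\nu(A)=0$, hence $\lambda(A)=0$, hence $A=\emptyset$; thus $\CN_Z=\emptyset$, the Filippov trajectories form a genuine flow $Z_t$, and $N$ is (off a lower-dimensional tangency set, which is $\lambda$-negligible) a crossing manifold. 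In the ``if'' direction, $\alpha^+F^+h=\alpha^-F^-h$ together with $\alpha^\pm>0$ forces $F^+h$ and $F^-h$ to have the same sign at each point of $\Sigma$, so again $N$ is a crossing manifold and $Z_t$ is a flow whose orbits cross $\Sigma$ transversally. Using $Z_{t+s}=Z_t\circ Z_s$ it is enough to prove ``$\nu(Z_t(B))=\nu(B)$ for every Borel $B$'' for small $|t|$, and by injectivity of $Z_t$, countable additivity, and a finite partition of $M$ subordinate to a finite open cover, it suffices to treat $B$ lying in a small chart of one of two types: (i) a chart $U\Subset C_i$ on which $Z_t=(X_i)_t$; (ii) a chart straddling $\Sigma$.

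Type (i) is the classical Liouville fact: the flow of the smooth field $F^\pm=X_i$ preserves $\lambda$, equivalently $\nu^\pm=\alpha^\pm\lambda$ (as $\alpha^\pm$ is constant), iff $\operatorname{div}_\lambda F^\pm=0$, and this is exactly invariance of $\nu$ on $U$. For type (ii), fix $p$ in the crossing region and coordinates $(y,z)\in\R^{n-1}\times\R$ with $\Sigma=\{z=0\}$, $\{h>0\}=\{z>0\}$, $\lambda=\omega(y,z)\,dy\,dz$ and $h=z\,g(y,z)$ with $g>0$ (Hadamard, $0$ a regular value). Assuming the crossing goes from $\{h<0\}$ to $\{h>0\}$, introduce adapted coordinates $(\xi,u)$ by $\Psi(\xi,u)=(F^-)_u(\xi,0)$ for $u\le0$ and $\Psi(\xi,u)=(F^+)_u(\xi,0)$ for $u\ge0$; transversality makes $\Psi$ a homeomorphism near $p$, a diffeomorphism on each side. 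The decisive point is that in these coordinates the Filippov flow is the translation $Z_t(\xi,u)=(\xi,u+t)$. Hence $\Psi^*\nu$ is absolutely continuous of the form $m(\xi,u)\,d\xi\,du$, with $m=\alpha^-\,\omega(\Psi)\,|\det D\Psi|$ for $u<0$ and $m=\alpha^+\,\omega(\Psi)\,|\det D\Psi|$ for $u>0$, and invariance of $\nu$ near $p$ amounts to $m(\xi,u)=m(\xi,u+t)$ a.e. for all small $t$. Taking $u,u+t$ on the same side shows $m(\xi,\cdot)$ is constant on each side; on the $\pm$ side this constancy is exactly — via the straightening, which turns $F^\pm$ into $\partial_u$ — the statement that $F^\pm$ preserves $\lambda$, i.e. preserves $\nu^\pm$. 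Taking $u<0<u+t$ shows the two constants agree; evaluating at $u=0$, where $D\Psi(\xi,0)$ has columns $e_1,\dots,e_{n-1},F^\pm(\xi,0)$ and hence determinant the $z$-component $F^\pm_z(\xi,0)=F^\pm h(\xi,0)/g(\xi,0)$, this matching reads $\alpha^-F^-h=\alpha^+F^+h$ on $\Sigma$. So near $p$, invariance of $\nu$ is equivalent to both conditions of the theorem.

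Gluing the local equivalences over the finite cover — type (i) charts exhausting the $C_i$ away from $\Sigma$, type (ii) charts covering a neighborhood of $\Sigma$ (on whose one-sided parts $Z_t$ again coincides with $(F^\pm)_t$, which recovers the bulk condition near $\Sigma$) — and iterating in $t$ yields the theorem; the crossing condition, obtained on the crossing region, extends to all of $\Sigma$ by continuity and is anyway vacuous on the $\lambda$-null tangency set. Equivalently, the two conditions together say that the discontinuous field $\alpha^\pm F^\pm$ is divergence-free in the sense of distributions, the bulk part giving $\operatorname{div}_\lambda F^\pm=0$ and the jump part giving $\alpha^+F^+h=\alpha^-F^-h$ on $\Sigma$; and one may replace the normal-form step by a flux-tube computation: for a small patch $S\subset\Sigma$ and small $\e>0$, $Z_\e$ maps the set of points reaching $S$ under $F^-$ within time $\e$ onto the set of points that have left $S$ under $F^+$ within time $\e$, and — once $\operatorname{div}F^\pm=0$ is known — conservation of flux along a flow tube gives the $\lambda$-volume of both as $\e\int_S F^\mp h/|\nabla h|\,dS$, so equality of their $\nu$-measures for all $S$ is precisely the crossing condition.

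I expect the main difficulty to be making the adapted coordinates $\Psi$, and the assertion that $Z_t$ is a translation in them, fully rigorous at and across $\Sigma$: continuity of $\Psi$ and of the Filippov flow through the discontinuity, injectivity, the exact range of $u$ and $t$ for which the chart is valid, and the identification of the boundary Jacobian with $F^\pm h/g$. The remaining ingredients — Liouville on each side, the elementary fact that an absolutely continuous measure invariant under all small translations in one coordinate has a density independent of that coordinate, the reduction to small $t$, the partition-of-unity globalization, and the use of Theorem \ref{Tprob.di} to eliminate the sliding and escaping regions — are routine.
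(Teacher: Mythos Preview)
Your argument is correct and, in its primary line (the adapted straightening chart $\Psi$ in which the Filippov flow becomes a translation), considerably more detailed than the paper's own proof. The paper proceeds essentially by the flux-tube computation you sketch in your final paragraph: after disposing of $\Sigma^s\cup\Sigma^e$ by a direct collapsing argument (rather than via Theorem~\ref{Tprob.di} as you do), it observes that $F^\pm$ preserving $\alpha^\pm\lambda$ is the same as $G^\pm=\alpha^\pm F^\pm$ preserving $\lambda$, and then simply asserts that, given divergence-free $G^\pm$ on each side, the piecewise field preserves $\lambda$ if and only if the surface fluxes $\int_\sigma G^\pm h/|\nabla h|\,d\Sigma$ agree for every small disk $\sigma\subset\Sigma$, which yields $\alpha^+F^+h=\alpha^-F^-h$ pointwise. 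Your adapted-coordinate route makes this heuristic rigorous and transparent: it explains \emph{why} the flux condition is the right one (matching of the one-sided densities at $u=0$), it cleanly separates the bulk Liouville condition from the interface condition, and it handles the ``only if'' direction for the bulk condition, which the paper's flux argument takes for granted. The cost is the extra care you flag about regularity and injectivity of $\Psi$ across $\Sigma$; the paper simply does not address this. Either way, the mathematical content is the same equivalence, and your closing remark that the two conditions amount to the distributional identity $\operatorname{div}(fZ)=0$ is a useful unifying observation not made explicit in the paper.
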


Theorem \ref{Tvol} is proven is Section \S \ref{sec:inv.measures}. We also provide two main consequences of Theorem \ref{Tvol}. The first consequence (Corollary \ref{c1}) shows that the Filippov systems preserving Lebesgue measure are the refractive ones which preserve Lebesgue measure in the regions of continuity. The second consequence (Corollary \ref{c2}) gives a necessary condition for a tangency-tangency point of a planar Filippov system  to be a center point. 

Now, let us grasp some ideas before trying to understand general invariant measures for Filippov systems. For instance, consider the following Filippov systems defined on an open set $U\subset\R^2$:
\[
Z_1(x,y)=\left\{\begin{array}{ll}
(1,-1),& \text{if}\quad y>0,\\
(1,1),& \text{if}\quad y<0,\\
\end{array}\right.\quad
Z_2(x,y)=\left\{\begin{array}{ll}
(1,-1),& \text{if}\quad y>0,\\
(-1,1),& \text{if}\quad y<0.\\
\end{array}\right.
\]
We know that every open set $V\subset U$ flowing through the trajectories of $Z_1$ eventually collapses on $\Sigma=\{y=0\}$. This phenomenon prevents the existence of any invariant probability measure (see Figure \ref{proto1}). Indeed, assume that $Z_1$ admits an invariant measure $\mu$. We know that there exists $t_0>0$ such that $Z_{t_0}(J_1)=Z_{t_0}(J_2)=I$. Therefore, $\mu(J_1)=\mu(I)=\mu(J_2)$. However $J_1\cup J_2\subset J=Z_{-t_0}(I)$. Hence, $\mu(I)=\mu(J)$ and $\mu(J_1)+\mu(J_2)\leq \mu(J)$ which leads to a contradiction.
\begin{figure}[h]
\begin{center}
\begin{overpic}[width=8cm]{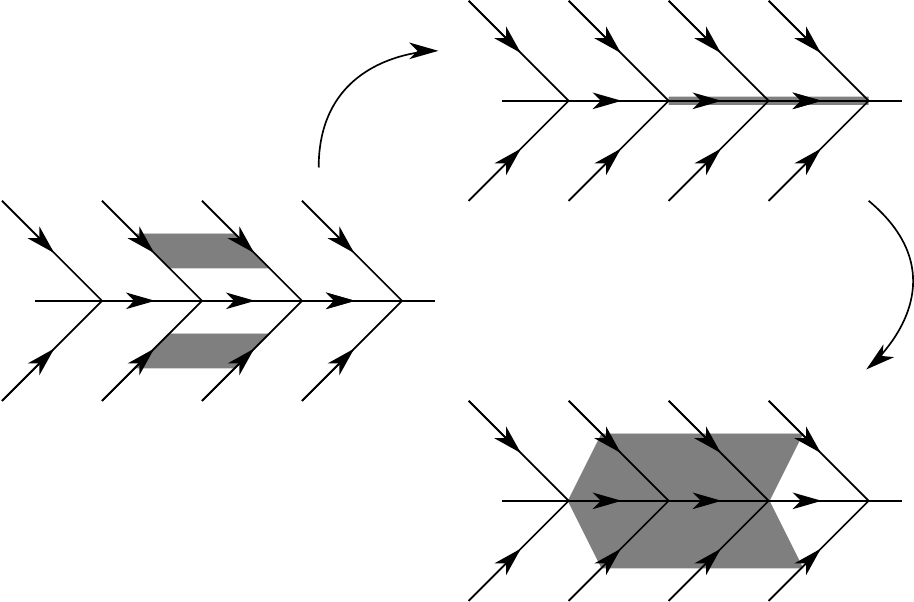}
\put(40,43){$A$}
\put(88,65){$B$}
\put(88,23){$C$}
\put(28,39){$J_1$}
\put(28,25){$J_2$}
\put(86,57){$I$}
\put(70,20){$J$}
\end{overpic}
\end{center}

\bigskip

\caption{The existence of an invariant measure for the Filippov systems $Z_1$ leads to a contradiction.}\label{proto1}
\end{figure}

This illustrates that the break of uniqueness for the solutions of Filippov systems seems to be a  barrier for the existence of invariant measures.  Hence, we conjecture the following:

\vspace{0.3cm}

\noindent\textbf{Conjecture} If the Filippov systems admits an invariant measure $\mu$, then the set formed by all possible orbits which contain points of nonuniqueness of solution has zero measure.

\vspace{0.3cm}

In what follows we shall denote by $\CN_{Z}$ the set of points in $M$ for which there is a break of uniqueness for the solutions of $Z$ and by $\sat(\CN_Z)$ the set formed by all possible orbits containing points of $\CN_Z$ (formal definitions are given in Section \S \ref{sec:diff.inclusion}).  The above statement is a very natural conjecture and in some sense the proof really goes as show in the motivational example above. Nevertheless, Filippov systems have some peculiarities and the proof has a little technical problem to be overcomed.
 In short, we need a kind of continuous selection property for its solutions. This property is held, in particular, when one assume some Lipschitz condition (see Section \S \ref{sec:diff.inclusion} and Appendix for precise definitions) which is usually not satisfied for Filippov systems. However, in order to support our conjecture, we prove a very similar result as stated in the conjecture for differential inclusions (see Appendix for the proof). We hope to motivate the reader to overcome our proof and obtain a positive answer to our conjecture.

\begin{remark}
 We are able to establish the conjecture for planar Fillippov systems and also for Filippov systems without tangencies. In these cases, the argument is basically the one given in the motivational example above. The technical difficulty mentioned above is to control the trajectories that goes through tangencies in higher dimensional Filippov systems.
\end{remark}
 
Finally, in Section \S \ref{sec:on.torus} we study several examples of Filippov systems and their invariant measures. The first example (see Section \S \ref{subsec1}) deals with constant piecewise vector fields defined on the torus $\mathbb{T}^2$ and on the Klein bottle $\mathbb{K}^2$. We provide conditions (Proposition \ref{torusklein})  in order for these systems to preserve some absolutely continuous measures. The second example (see Section \S \ref{subsec2}) is a Filippov system defined on $\mathbb{T}^2$ with no invariant probability measures and such that $\sat(\CN_Z)=\mathbb{T}^2$. The third example (see Section \S \ref{subsec3}) is a Filippov system defined on $\mathbb{T}^2$ preserving an absolutely continuous probability measure, for which $\sat(\CN_Z)$ is an open set strictly contained in $\mathbb{T}^2$. The fourth and last example (see Section \S \ref{subsec4}) is a Filippov system defined on a compact non-orientable manifold $M$ with no invariant probability measures, for which $\sat(\CN_Z)$ is a closed set strictly contained in $M$.

\section{Preliminary concepts}\label{prem}

In this section, we first introduce the concept of differential inclusions. Then, we use this concept to state the Filippov conventions for solutions of piecewise smooth systems. Finally, we define the meaning of a Filippov system to preserve a measure.

\subsection{Differential Inclusion}\label{sec:diff.inclusion}
In what follows we briefly introduce the concept of differential inclusions. For more details on this subject we recommend the books \cite{AC,S}. Let $U$ be an open subset set of $\R^n$ and $\CF:U\rightarrow \R^n$ be a set-valued function, that is, for each $x\in U$, $\CF(x)\subset \R^n$. Given an interval $I\subset\R,$ a function $\phi:I\rightarrow U$ is said to be a solution of the differential inclusion
\begin{equation}\label{di}
\dot x\in\CF(x)
\end{equation}
if $\phi$ is an absolutely continuous function satisfying \eqref{di} almost everywhere, that is $\dot\phi(t)\in\CF(\phi(t))$ for almost every $t\in I$.  Usually, given $x\in U$, $\CS_{\CF}(x)$ denotes the set of all maximal solutions $\phi(t),$ $t\in I_{x,\phi},$ of $\eqref{di}$ satisfying $\phi(0)=x$. 
Accordingly, for $x\in U$ and $V\subset U$ we denote their saturation respectively by
\[
\sat(x)=\bigcup_{\phi\in \CS_{\CF}(x)}\{\phi(t):\,t\in I_{x,\phi}\} \quad\text{and}\quad \sat(V)=\bigcup_{x\in V}\sat(x).
\]

Finally, we denote by $\CN_{\CF}$ the set of points of $x\in U$ such that $\# \CS_{\CF}(x)>1$, that is there exist at least two solutions $\phi_1, \phi_2\in \CS_{\CF}(x)$ such that $\phi_1\neq\phi_2$. In this case, since the solutions are maximal, we are able to find $t_0\in I_{x,\phi_1}\cap I_{x,\phi_2}$ for which $\phi_1(t_0)\neq\phi_2(t_0)$. In other words $\CN_{\CF}$ constitutes the set of points in $U$ for which the uniqueness of solution is lost.

\subsection{Filippov systems}\label{sec:filippov.systems}
Let $M$ be a compact Riemannian manifold. Throughout this paper we fix the metric of $M$, that is for each $p \in M$ we associate an inner product $\langle\cdot\,,\cdot\rangle_p$ on $T_pM$.  When the context is clear we shall denote $\langle\cdot\,,\cdot\rangle_p=\langle\cdot\,,\cdot\rangle.$

Let $N\subset M$ be a codimension 1 compact submanifold of $M$. Denote by $C_i$, $i=1,2,\ldots,k$, the connected components of $M\setminus N$. Let $X_i: M\rightarrow TM$, for $i=1,2,\ldots,k$, be vector fields on $M$, i.e. $X_i(p)\in T_pM$. Consider a piecewise smooth vector field on $M$ given by
\begin{equation}\label{dds}
Z(p)=X_i(p)\,\,\textrm{if}\,\, p\in C_i,\,\,\text{for}\,\, i=1,2,\ldots,k.
\end{equation}

Since $N$ is a codimension 1 compact submanifold of $M$, we can find, for each $p\in N,$ a neighborhood $D\subset M$ of $p$ and a function $h:D\rightarrow\R$, having $0$ as a regular value, such that $\Sigma=N\cap D=h^{-1}(0)$. Moreover, the neighborhood $D$ can be taken sufficiently small in order that $D\setminus \Sigma$ is composed by two disjoint region $\Sigma^+$ and $\Sigma^-$ such that $F^+=Z|_{\Sigma^+}$ and $F^-=Z|_{\Sigma^-}$ are smooth vector fields. Accordingly, the piecewise smooth vector field \eqref{dds} may be locally described as follows:
\begin{equation}\label{locdds}
Z(p)=(F^+,F^-)_{h}=\left\{\begin{array}{l}
F^+(p),\quad\textrm{if}\quad h(p)>0,\vspace{0.1cm}\\
F^-(p),\quad\textrm{if}\quad h(p)<0,
\end{array}\right. \quad \text{for}\quad p\in D.
\end{equation}

In \cite{F}, Filippov stated that the local trajectories of system \eqref{locdds} is a solution of a differential inclusion $\dot p\in\CF_Z(p)$, where $\CF_Z$ is the following set-valued function:
\begin{equation}\label{FZ}
\CF_Z(p)=\dfrac{F^+(p)+F^-(p)}{2}+\sgn(h(p))\dfrac{F^+(p)-F^-(p)}{2},
\end{equation}
and
\[
\sgn(u)=\left\{
\begin{array}{ll}
-1&\text{if}\,\, u<0,\\

[-1,1]&\text{if}\,\, u=0,\\
1&\text{if}\,\, u>0.
\end{array}\right.
\]
This approach is called Filippov's convention. The piecewise smooth vector field \eqref{dds} is called Filippov system when it is ruled by the Filippov's convention.  

We stress that for the case of Filippov systems \eqref{locdds}, the solutions of the differential inclusion $\dot p\in\CF_Z(p)$  have an easy geometrical interpretation. We shall briefly discuss it at the beginning of Section \S \ref{sec:inv.measures}. For now, we define the following open regions on $\Sigma$:
\begin{equation}\label{filippov}
\begin{array}{l}
\Sigma^c=\{p\in \Sigma:\, F^+h(p)\cdot F^-h(p) > 0\},\\

\Sigma^s=\{p\in\Sigma:\, F^+h(p)<0,\,F^-h(p) > 0\},\\

\Sigma^e=\{p\in\Sigma:\, F^+h(p)>0,\,F^-h(p) < 0\},\\
\end{array} 
\end{equation}
where $F^{\pm}h(p)=\langle\nabla h(p),F^{\pm}(p)\rangle$. Usually they are called {\it crossing}, {\it sliding}, and {\it escaping} region, respectively.

For sake of simplicity we denote by $\CS_Z(p)$ and $\CN_Z$ the sets $\CS_{\CF_Z}(p)$ and $\CN_{\CF_{Z}}$, respectively. Notice that $\Sigma^s\cup\Sigma^e\in\CN_Z$.

\subsection{Measure preserving}\label{sec:measure.preserving}

A Riemannian manifold $M$ can be seen  as a measurable space, where the sigma algebra $\mathcal B$ is the Borel sigma algebra that is, the one generated by the open sets of $M$. Throughout this paper we shall only work with Borel measures, that is the ones which $\sigma$-algebra associated is the Borel $\sigma$-algebra.  Recall that a probability measure on $M$ is a map $\mu:\mathcal B \rightarrow [0,1]$ such that $\mu(M)=1$, $\mu(\cup_{i \in \mathbb N}A_i)=\sum_i\mu(A_i)$ if the sets $A_i$ are disjoint, and $\mu(A)\leq \mu(B)$ if $A \subset B$.

Let $X_t$ denote the flow of a smooth vector field $X:M\rightarrow TM$ and $\mu$ a measure on $M$. We say that a flow $X_t$ preserves a measure $\mu$ if: for any subset Borel set $A \subset M$, $\mu(X_t(A))=\mu(A)$, $\forall t \in \mathbb R$. Nevertheless, when one consider differential inclusions and, in particular, Filippov systems, we have seen that for a given initial condition $p_0\in M$ it may exist several solutions starting at $p_0$. Consequently, the previous definition of flow and measure preserving fails. In order to overcome this difficulty, considering the analogous definition of measure preserving for flow, we say that the differential inclusions \eqref{di} preserves a measure $\mu$ if 
\begin{equation}\label{pres:di}
\mu\big(\CS_{\CF}(A)(t)\big)=\mu(A),
\end{equation}
for any Borel subset $A\subset M$, where
\[
\CS_{\CF}(A)(t)=\bigcup_{x\in A}\CS_{\CF}(x)(t).
\]
For Filippov systems we denote
\[
Z_t(p)= \CS_{Z}(p)(t)=\{\phi(t):\, \phi\in \CS_{Z}(p)\}.
\]
Hence, from \eqref{pres:di}, we say that the Filippov system \eqref{locdds} preserves a measure $\mu$ if $\mu(Z_t(A))=\mu(A),$ for any Borel subset $A\subset M$.  

Due to the nonuniqueness of solutions this concept may be a little bit restrictive for differential inclusion in general, indeed one may find different approaches to work with a measure preserving differential inclusions (e.g. \cite{artstein} and the references therein).

\section{Invariant measures for Filippov systems}\label{sec:inv.measures}

Consider the Filippov system \eqref{dds} defined on the compact Riemannian manifold $M$. The solutions of the associated differential inclusion \eqref{FZ} (and the sets $Z_t(p_0)$) are well described and fairly known in the literature (see \cite{F}). In order to state these conventions the regions on $\Sigma$ given by \eqref{filippov} must be distinguished. Notice that the points on $\Sigma$ where both vectors fields $F^+$ and $F^-$ simultaneously point outward or inward from $\Sigma$ constitute, respectively, the {\it escaping} $\Sigma^e$ and {\it sliding} $\Sigma^s$ regions, and the complement of its closure in $\Sigma$ constitutes the {\it crossing region} $\Sigma^c$. The complement of the union of those regions $\Sigma^t$ constitute the {\it tangency} points between $F^+$ or $F^-$ with $\Sigma$. For $p\in\Sigma^c$ the solutions either side of the discontinuity $\Sigma$, reaching $p$, can be joined continuously, forming a solution that crosses $\Sigma^c\subset N$. Alternatively, for $p\in \Sigma^{s,e}=\Sigma^s\cup \Sigma^e\subset N$ the solutions either side of the discontinuity $\Sigma$, reaching $p,$ can be joined continuously to solutions that slide on $\Sigma^{s,e}$ following the sliding vector field:
\begin{equation}\label{slid}
Z^s(p)= \dfrac{F^- h(p) F^+(p)- F^+ h(p) F^-(p)}{F^- h(p) - F^+ h(p) },\,\, \text{for} \,\, p\in \Sigma^{s,e}.
\end{equation}

\begin{proof}[Proof of Theorem \ref{Tvol}]
First of all, a necessary condition for $Z$ to preserve $\nu$ is that $\Sigma^s\cup\Sigma^e=\emptyset$. Indeed,  if $\Sigma^s\neq\emptyset$ (resp. $\Sigma^e\neq\emptyset$) we may find sets $A\subset M$, with positive measure, such that the forward flow (resp. backward flow) of $Z$ collapses $A$ into a set $\widetilde A\subset\Sigma^s$ (resp. $\widetilde A\subset\Sigma^e$), but since $\Sigma^s$ (resp. $\Sigma^e$) is a codimension one manifold it has zero volume measure, hence $\widetilde A$ has zero volume measure. Another important point is that the finite saturation of $\Sigma^t$ through the orbits of $Z$ has zero volume measure. So, we are not worried with this set.

Now, to say that the vector fields $F^{\pm}(x)$ and $Z(x)$ preserve, respectively, the measures $\nu^{\pm}$ and $\nu$ is equivalent to say that the vector fields $G^{\pm}(x)=\alpha^{\pm}F^{\pm}(x)$ and $G(x)=f(x)Z(x)$ preserve the Lebesgue measure $\lambda$.

Since $\Sigma=h^{-1}(0)$, with $0$ a regular value of $h$, the following map
\[
\eta:x \in \mathcal U \mapsto \dfrac{\nabla h(x)}{||\nabla h(x)||}\in T_xM
\] 
is well defined on some neighborhood $\mathcal U$ of  $\Sigma$. Notice that $\eta$ is a unit vector field on $\mathcal U$ which is normal to the codimension one manifold $\Sigma$. 

Let $\sigma$ be a small disk inside $\Sigma$. The flux $V^{\pm}(\sigma)$ of the vector fields $G^{\pm}$ through $\sigma$, that is the total amount of flow of $G^{\pm}$ passing through $\sigma$, is measured by the surface integral
\[
V^{\pm}(\sigma)=\int_\sigma\left\langle G^{\pm},\eta\right\rangle \,d\Sigma=\int_\sigma \dfrac{\left\langle G^{\pm},\nabla h\right\rangle}{||\nabla h||} \,d\Sigma=\int_\sigma \dfrac{G^{\pm}h}{||\nabla h||} \,d\Sigma,
\]
where $d\Sigma$ denotes the volume form of $\Sigma$. Since the vector fields $G^{\pm}$ preserve volume measure, the vector field $G$ will preserve volume measure if, and only if, $V^{+}(\sigma)=V^{-}(\sigma)$ for every small $\sigma\subset\Sigma$. Hence, we conclude that $Z$ preserves $\nu$ if, and only if, $\alpha^+F^+h(p)=\alpha^-F^-h(p)$ for every $p\in\Sigma$.
\end{proof}

In the remainder of this section we present two main consequences of Theorem \ref{Tvol}.

Firstly, piecewise continuous systems of kind \eqref{dds} satisfying $F^+h(p)=F^-h(p)$ constitute a well known class of Filippov systems called {\it refractive systems} (see \cite{BMT}). The next result is obtained immediately from Theorem \ref{Tvol} by taking $\alpha^{\pm}=1$. 

\begin{mcor}\label{c1}
The Filippov system $Z=(F^+,F^-)_{h}$ preserves volume measure if, and only if, $F^{\pm}$ preserve volume measure in $\Sigma^{\pm}$ and $Z$ is a refractive system. 
\end{mcor}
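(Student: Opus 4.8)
The plan is to obtain Corollary \ref{c1} as the special case $\alpha^+=\alpha^-=1$ of Theorem \ref{Tvol}. First I would observe that the constant function $f\equiv 1$ is trivially piecewise constant with associated constants $\alpha^{\pm}=1$, so Theorem \ref{Tvol} applies with no additional hypotheses to check. For this choice one has $\nu=f\cdot\lambda=\lambda$, the volume measure on $M$, while the auxiliary measures on the regions of continuity become $\nu^{\pm}=\alpha^{\pm}\cdot\lambda=\lambda$ as well.

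Substituting these identifications into the statement of Theorem \ref{Tvol}, I would conclude that $Z=(F^+,F^-)_h$ preserves the volume measure $\lambda$ if and only if the vector fields $F^{\pm}$ preserve $\lambda$ on $\Sigma^{\pm}$ and $\alpha^+F^+h(p)=\alpha^-F^-h(p)$ for every $p\in\Sigma$. With $\alpha^{\pm}=1$ the last condition reduces to $F^+h(p)=F^-h(p)$ for every $p\in\Sigma$.

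Finally I would invoke the definition recalled just above the statement, namely that a piecewise continuous system of the form \eqref{dds} is refractive precisely when $F^+h(p)=F^-h(p)$ for every $p\in\Sigma$. Rewriting the equivalent condition from the previous step in these terms yields exactly the asserted biconditional: $Z$ preserves the volume measure if and only if $F^{\pm}$ preserve the volume measure in $\Sigma^{\pm}$ and $Z$ is a refractive system.

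There is no genuine obstacle here, the corollary being a direct specialization of Theorem \ref{Tvol}. The only point worth stressing in the write-up is the bookkeeping that ``$F^{\pm}$ preserve the volume measure in $\Sigma^{\pm}$'' is literally the $\alpha^{\pm}=1$ instance of ``$F^{\pm}$ preserve $\nu^{\pm}=\alpha^{\pm}\cdot\lambda$ on $\Sigma^{\pm}$'' from Theorem \ref{Tvol}, so that nothing beyond that theorem and the definition of a refractive system is needed.
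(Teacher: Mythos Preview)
Your proof is correct and follows exactly the paper's approach: the corollary is obtained immediately from Theorem \ref{Tvol} by specializing to $\alpha^{\pm}=1$, so that $\nu=\lambda$, $\nu^{\pm}=\lambda$, and the compatibility condition reduces to the refractive condition $F^+h=F^-h$ on $\Sigma$.
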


Now, a point $p\in\Sigma$ is called a {\it contact of multiplicity $k$} (or {\it order} $k-1$) between a vector field $F$ and $\Sigma$, if
\[
F h(p) = F^2h(p) = \ldots = F^{k-1}h(p) =0,\text{ and } F^{k} h(p)\neq 0,
\]
where the higher Lie derivative $F^n h(p)$ is recursively defined as $F^{n}h(p) =  F (F^ {n-1}h)(p),$ for $n>1.$ In addition, for the vector fields $F^+$ and $F^-$ defining the piecewise vector field \eqref{locdds}, if $(F^{\pm})^{k-1}h(p)\lessgtr0$ then it is called invisible, otherwise it is called visible. It is fairly known that if $F^{\pm}$ are planar vector fields and $p$ is an invisible  contact of even multiplicity between $F^{\pm}$ and  $\Sigma$ satisfying $\langle F^+(p),F^-(p)\rangle<0$  (see Figure \ref{tang}),  then a first return map is well defined on a small neighborhood of $p$ in $\Sigma.$ In this case, as an application of Corollary \ref{c1}, the next result provides sufficiently conditions in order to assure that $p$ is a center point, that is there exists a small neighborhood $U$ of $p$ in $M$ such that all the orbits contained in $U\setminus\{0\}$ are closed.

\begin{figure}[h]
\begin{center}
\begin{overpic}[width=4cm]{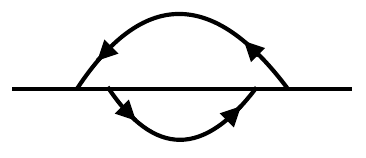}
\put(47.5,16.05){$\circ$}
\end{overpic}
\end{center}

\bigskip

\caption{Invisible contact of even multiplicity between $F^{\pm}$ and  $\Sigma$ satisfying  $\langle F^+(p),F^-(p)\rangle<0$.}\label{tang}
\end{figure}

\begin{mcor}\label{c2}
Consider the Filippov vector field $Z=(F^+,F^-)_{h}$ and let $p\in\Sigma$ be an invisible contact of even multiplicity between $F^{\pm}$ and  $\Sigma$. If $\text{tr}(d F^{\pm}(p))=0$ and $Z$ is refractive, then $p$ is a center point.
\end{mcor}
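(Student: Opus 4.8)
The plan is to reduce the statement to the claim that the first return map of $Z$ to $\Sigma$ near $p$ is the identity, and then to extract this from the invariance of the Lebesgue measure provided by Corollary \ref{c1}. First I would exploit refractivity: since $Z=(F^+,F^-)_h$ is refractive, $F^+h(q)=F^-h(q)$ for every $q\in\Sigma$, hence $\Sigma^s=\Sigma^e=\emptyset$ and every point of a small punctured neighbourhood of $p$ in $\Sigma$ is a crossing point, so there the Filippov flow $Z_t$ is a genuine local flow transverse to $\Sigma$. As recalled before the statement, $p$ being an invisible even-order tangency for both $F^{\pm}$ with $F^+(p)F^-(p)<0$, a first return map $\pi$ to $\Sigma$ is well defined on a one-sided neighbourhood of $p$ and factors as $\pi=\rho^-\circ\rho^+$, where each $\rho^{\pm}$ is the half-return (``fold'') map of $F^{\pm}$, an orientation-reversing involution fixing $p$. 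Consequently $\pi$ fixes $p$, is orientation-preserving, and $\pi=\mathrm{id}$ near $p$ if and only if $\rho^+=\rho^-$; and if $\pi=\mathrm{id}$ then every orbit of $Z$ meeting $\Sigma$ near $p$ is a closed loop, these loops fill a punctured neighbourhood of $p$ and shrink to $p$, so $p$ is a center. It thus suffices to prove $\pi=\mathrm{id}$.

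Next I would bring in the trace condition, using it in the form that $F^{\pm}$ preserve the Lebesgue measure near $p$ (this is what $\mathrm{tr}\,dF^{\pm}=0$ amounts to locally). Then Corollary \ref{c1}, together with refractivity, yields that $Z$ preserves the Lebesgue measure $\lambda$ on a neighbourhood of $p$. Since there $Z_t$ is an honest flow transverse to $\Sigma$ away from $p$, its first return map $\pi$ preserves the induced flux measure on $\Sigma$, namely $d\mu_\Sigma=\langle F^+,\eta\rangle\,d\Sigma=\dfrac{F^+h}{\|\nabla h\|}\,d\Sigma$ (with $\eta$ the unit normal and $d\Sigma$ the volume form of $\Sigma$, as in the proof of Theorem \ref{Tvol}): this is the classical fact that the first return map of a volume-preserving flow preserves the flux form of the transversal, and on the one-sided punctured neighbourhood of $p$ on which $\pi$ acts the density $F^+h/\|\nabla h\|$ is smooth and does not vanish. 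Finally, the germ of $\pi$ at $p$ is then an orientation-preserving self-map of a half-open interval fixing its endpoint $p$ and preserving a measure with smooth positive density; in the coordinate given by a primitive of that density, $\pi$ becomes an orientation-preserving self-map of $[0,\delta)$ that fixes $0$ and preserves Lebesgue measure, which forces $\pi=\mathrm{id}$. By the first paragraph, $p$ is a center.

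The step I expect to be the main obstacle is the passage from the pointwise condition $\mathrm{tr}\big(dF^{\pm}(p)\big)=0$ to the local volume invariance that Corollary \ref{c1} requires (it asks $F^{\pm}$ to preserve Lebesgue measure on $\Sigma^{\pm}$, i.e.\ $\mathrm{tr}\,dF^{\pm}\equiv 0$ on a neighbourhood, not merely at $p$). Either one reads the hypothesis in this stronger local sense, or one replaces the measure-theoretic step by a direct jet computation of $\pi$: refractivity, $F^+h=F^-h$ along $\Sigma$, says precisely that the normal components of $F^+$ and $F^-$ agree on $\Sigma$, which is exactly the data entering the fold involutions $\rho^{\pm}$, and $\mathrm{tr}\,dF^{\pm}(p)=0$ disposes of the quadratic coefficient of $\pi-\mathrm{id}$; one would then have to show this coincidence propagates to all orders, giving $\rho^+=\rho^-$. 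That propagation is the delicate point; the remaining ingredients --- the factorization $\pi=\rho^-\circ\rho^+$, invariance of the flux measure, and the one-dimensional rigidity lemma --- are routine.
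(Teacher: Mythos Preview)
Your approach is essentially the same as the paper's: both invoke Corollary~\ref{c1} to conclude that $Z$ preserves Lebesgue measure, and then argue that this forces the first return map to be the identity, hence $p$ is a center. The paper's proof is simply terser---it phrases the last step as ``if the first return map is not the identity then $p$ would be attractive or repulsive, which is an absurd''---whereas you spell out the mechanism via the induced flux measure on $\Sigma$ and one-dimensional rigidity; your version is the rigorous content behind that sentence. You also correctly flag the gap between the pointwise hypothesis $\mathrm{tr}\big(dF^{\pm}(p)\big)=0$ and the local divergence-free condition that Corollary~\ref{c1} actually requires; the paper's proof makes the same tacit leap without comment, so this is a defect of the statement rather than of your argument.
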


\begin{proof}
From Corollary \ref{c1}, $Z$ is a volume preserving Filippov system. The refractive condition  implies that $\langle F^+(p),F^-(p)\rangle<0$. If the first return map is not the identity then $p$ would be attractive or repulsive, which is an absurd. Therefore, the first return map is the identity which implies that $p$ is a center point.
\end{proof}

The Conjecture stated in the introduction is based on the above Theorem \ref{Tvol}. With the definitions and notations from Section \S\ref{prem},   \underline{the Conjecture can be} \underline{restated as follows}: 

\begin{quote}
{\it Suppose that a Filippov system $Z$ admits an invariant probability measure $\mu$, then $\mu(\sat(\CN_{Z}))=0$.}
\end{quote}
In the Appendix we study the invariant measures for Lipschitizian differential inclusions. In this context, Theorem \ref{Tprob.di} (from Appendix) supports the conjecture.

\section{Examples of Filippov systems defined on compact manifolds}\label{sec:on.torus}

This section is devoted to provide examples of Filippov systems defined on compact manifolds for which the main results of the previous section may be applied. In Section \S 4.1, we deal with piecewise constant vector fields on $\torus^2$ and on $\mathbb{K}^2$. As a consequence of Theorem \ref{Tvol}, it is established conditions for these systems to admit an invariant volume measure. In Section \S 4.2, it is provided an example of a Filippov system defined on $\torus^2$ such that $\CN_Z=\torus^2$ and therefore, as a consequence of our Conjecture (which is true in this case), does not admit nontrivial invariant probability measures. In Section \S 4.3 we provide an example of a Filippov system defined on $\torus^2$ such that $\torus^2\setminus \CN_Z$ is a nonempty closed set and, therefore, admits invariant probability measures. In Section \S 4.4 we provide an example of a Filippov system defined on $\torus^2$ for which $\torus^2\setminus\CN_Z$ is a nonempty open set. We show that this system admits invariant probability measures. Furthermore, we also show how to perturb this system and make a surgery in $\mathbb{T}^2$ in order to obtain another Filippov system defined on a compact manifold $M$ for which $\sat(\CN_Z)$ is still a closed set strictly contained in $M$ but now with no invariant probability measure.

First of all consider the following piecewise smooth vector field defined on the square $S=[\alpha,\alpha+p]\times[\beta,\beta+q]\subset\R^2$:
\begin{equation}\label{torus}
Z(x,y)=\left\{\begin{array}{ll}
X_i(x,y)&\text{if}\,\,\, x\in[h_i,h_{i+1}],\,\,\, \text{for}\,\,\, i=1,2,\ldots,n-1,\\
X_n(x,y) &\text{if}\,\,\, x\in[\beta,h_1],
\end{array}\right.
\end{equation}
where each $X_i(x,y)$,  $i=1,2,\ldots,n$, is a smooth vector field defined on $S$. Denote the sets of discontinuity by $\Sigma_i=[\alpha,\alpha+p]\times\{h_i\}$, for $i=1,2,\ldots,n$, with $\beta<h_1<h_2<\cdots<h_n=\beta+q$. Without loss of generality, we assume that $n$ is even (if it is not, one can virtually add another line and take the vector fields on both side as being the same).

We denote by $\mathbb{T}^2$ the Torus given by the quotient $\mathbb{T}^2=S/\sim$, where
\[
(x,y)\sim(z,w)\Leftrightarrow x-z\in p\mathbb{Z},\, y-w\in q\mathbb{Z},
\]
which identifies $[\alpha,\alpha+p]\times\{b\}$ with $[\alpha,\alpha+p]\times\{b+q\}$ and $\{a\}\times[\beta,\beta+q]$ with $\{a+p\}\times[\beta,\beta+q]$, preserving the orientation. Accordingly, the vector field \eqref{torus} can be seen as defined on $\mathbb{T}^2$. In this case the set of discontinuity $\Sigma$ is given by the union of $\Sigma_i$, for $i=1,2,\ldots,n$.  Clearly $\Sigma_n=[\alpha,\alpha+p]\times\{\beta\}=[\alpha,\alpha+p]\times\{\beta+q\}$. 

Analogously we denote by $\mathbb{K}^2$ the Klein bottle given by the quotient $\mathbb{K}^2=S/\sim$, where now
\[
(x,y)\sim(z,w)\Leftrightarrow x-z\in p\mathbb{Z},\, y+w\in q\mathbb{Z},
\]
which identifies $[\alpha,\alpha+p]\times\{\beta\}$ with $[\alpha,\alpha+p]\times\{\beta+q\}$ and $\{a\}\times[\beta,\beta+q]$ with $\{a+p\}\times[\beta,\beta+q]$, reversing the orientation in the last identification.  The piecewise vector field \eqref{torus} can be seen as defined on $\mathbb{K}^2$, but in this case an additional discontinuity is added, namely $\Sigma_0=\{a\}\times[\beta,\beta+q]=\{a+p\}\times[\beta,\beta+q]$. Thus, in order to the lines of discontinuity agree with the identification, we assume in addition that $h_1-\beta=h_{n}-h_{n-1}$ and $h_{i}-h_{i-1}=h_{n-i+1}-h_{n-i},$ for $i\in\{1,2,\ldots,n/2-1\}$. It is worth mentioning that the Klein bottle could also be obtained by reversing the orientation of the first identification.  In this case the set of discontinuity would coincide with the torus case.

\subsection{Piecewise constant vector fields on $\mathbb{T}^2$ and on $\mathbb{K}^2$}\label{subsec1}

Let $a_i\in\R$ and $b_i>0$, for $i=1,2\ldots,n$. Consider the vector field \eqref{torus} defined on $[0,1]^2$, and assume that  $X_i(x,y)=(a_i,b_i)$, for $i=1,2,\ldots,n$. For the sake of simplicity, denote $(a_{n},b_{n})=(a_{0},b_{0})$. The next result is obtained from Theorem \ref{Tvol}:

\begin{proposition}\label{torusklein}
For $\alpha_i>0,$ $i=1,2,\ldots,n,$ let $f:[0,1]^2\rightarrow\R$ be the following constant piecewise function:
\begin{equation}\label{functorus}
f(x,y)=\left\{\begin{array}{ll}
\al_i&\text{if}\,\,\, y\in[h_i,h_{i+1}],\,\,\, \text{for}\,\,\, i=1,2,\ldots,n-1,\\
\al_n &\text{if}\,\,\, y\in[0,h_1].
\end{array}\right.
\end{equation}
\begin{itemize}
\item[$(a)$]
The vector field \eqref{torus} defined on $\mathbb{T}^2$ preserves the measure $\nu=f\cdot \lambda$ if, and only if,  for some $C>0,$ $\al_i=C/b_i$, for $i\in\{1,2,\ldots, n\}$.

\item[$(b)$]
The vector field \eqref{torus} defined on $\mathbb{K}^2$ preserves the measure $\nu=f\cdot \lambda$ if, and only if, for some $C>0,$ $\al_i=C/b_i$,  for $i\in\{1,2,\ldots, n\}$, and $a_i/b_i=a_{n-i-1}/b_{n-i-1},$ for $i\in\{0,1,\ldots, n/2-1\}$.
\end{itemize}
\end{proposition}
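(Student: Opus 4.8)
The plan is to deduce both assertions from Theorem~\ref{Tvol}, applied along each connected component of the discontinuity set $\Sigma$. Two families of conditions appear in that theorem. The first asks that the constituent vector field of each region preserve the corresponding rescaled Lebesgue measure; but every $X_i=(a_i,b_i)$ is constant, hence divergence-free, so it preserves $\la$ and therefore preserves $\al_i\cdot\la$ for any constant $\al_i>0$. Thus this family is automatically satisfied, and the whole question collapses to the interface identity $\al^{+}F^{+}h(p)=\al^{-}F^{-}h(p)$ at each point $p$ of each component of $\Sigma$. Since Theorem~\ref{Tvol} is an equivalence, it suffices to show that this system of identities is equivalent to the stated arithmetic conditions.

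First I would handle the horizontal discontinuity circles $\Sigma_i=[\al,\al+p]\times\{h_i\}$, present for both $\torus^2$ and $\mathbb{K}^2$. Near such a circle use the defining function $h(x,y)=y-h_i$ (with the evident adjustment at the circle produced by the identification $y=\be$ with $y=\be+q$), so that $\nabla h=(0,1)$ and $F^{\pm}h$ equals the second component of the adjacent constant field. Since every $b_i>0$, along each $\Sigma_i$ both $F^{+}h$ and $F^{-}h$ are positive, so $\Sigma_i\subset\Sigma^{c}$; in particular the necessary condition $\Sigma^{s}\cup\Sigma^{e}=\emptyset$ from the proof of Theorem~\ref{Tvol} is met. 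Each $\Sigma_i$ separates two consecutive strips in the circular order of the $y$-coordinate, and the identity there reads $\al b=\al'b'$ for the two adjacent pairs of data $(\al,b)$ and $(\al',b')$. The $n$ circles $\Sigma_1,\dots,\Sigma_n$ link the $n$ strips into a single cycle, so these identities together are equivalent to $\al_1b_1=\cdots=\al_nb_n$; writing $\al$ for the common value gives $\al_i=\al/b_i$. Together with the automaticity of the first family, this proves part $(a)$ in both directions.

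For part $(b)$ there is one further circle, $\Sigma_0=\{\al\}\times[\be,\be+q]=\{\al+p\}\times[\be,\be+q]$, carrying the orientation-reversing identification $(\al,y)\sim(\al+p,2\be+q-y)$. Near $\Sigma_0$ take $h$ to be the signed distance to $\Sigma_0$ in the $x$-direction, that is $h=x-\al$ on the side $x\approx\al$ and $h=x-\al-p$ on the side $x\approx\al+p$; since $\p h/\p y\equiv 0$, straightening a collar of $\Sigma_0$ across the identification shows that $F^{\pm}h$ now equals the \emph{first} component of the adjacent constant field, with no sign change — the $y$-reversal in the gluing merely changes which strip lies on the far side. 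Concretely, the two germs of $Z$ meeting at the point of $\Sigma_0$ of height $y_0$ are the constant fields of the strip containing $y_0$ and of the strip containing the reflected height $2\be+q-y_0$. Hence the identity at $\Sigma_0$ reads $\al_m a_m=\al_k a_k$ for the mirror-paired indices $m,k$; substituting $\al_i=\al/b_i$ from part $(a)$ turns it into $a_m/b_m=a_k/b_k$, and as $y_0$ sweeps the circle the pairs $(m,k)$ run through all mirror pairs, yielding exactly $a_i/b_i=a_{n-i+1}/b_{n-i+1}$ once the strips are labeled by increasing height (so that strip $i$ reflects to strip $n-i+1$). Conversely, these relations together with $\al_i=\al/b_i$ make every interface identity hold, so Theorem~\ref{Tvol} returns invariance of $\nu=f\cdot\la$, proving $(b)$.

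The step I expect to be the main obstacle is the bookkeeping at $\Sigma_0$: straightening a collar of $\Sigma_0$ across the orientation-reversing gluing so as to confirm both that $F^{\pm}h$ picks out the $a$-components without an extra sign and that the far-side strip is the one containing $2\be+q-y_0$; and then matching the geometric mirror partner under $y\mapsto 2\be+q-y$ with the label $n-i+1$, which presupposes that the partition $\{h_i\}$ is symmetric under that reflection and calls for some care with the labeling of the strips. One should also set aside the exceptional heights $y_0$ for which $2\be+q-y_0$ meets some $h_j$, where $Z$ is tangent to $\Sigma_0$; these are handled exactly as the tangency set $\Sigma^{t}$ in the proof of Theorem~\ref{Tvol} and are negligible for the volume measure.
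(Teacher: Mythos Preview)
Your proposal is correct and follows essentially the same approach as the paper: apply Theorem~\ref{Tvol} at each component of $\Sigma$, note that the constant (hence divergence-free) fields $X_i$ automatically preserve $\al_i\cdot\la$, and reduce everything to the interface identities, which yield the cyclic linear system $\al_ib_i=\al_{i+1}b_{i+1}$ for part $(a)$ and the additional mirror-pair relation at $\Sigma_0$ for part $(b)$. Your handling of the Klein-bottle case is in fact more complete than the paper's own proof, which derives the necessary conditions at $\Sigma_0$ but leaves the sufficiency direction unfinished; your collar-straightening and sign check at $\Sigma_0$, together with your remark that the labeling $i\leftrightarrow n-i+1$ tacitly presupposes a symmetric partition $\{h_i\}$, supply exactly what is missing there.
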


\begin{remark}
Note that, from statement $(a)$ of Proposition \ref{torusklein}, when the piecewise vector field \eqref{torus} is defined on $\mathbb{T}^2$, one can always find a piecewise constant function \eqref{functorus} such that \eqref{torus} preserves the absolutely continuous measure $\nu=f\cdot \lambda$. Nevertheless that is not the case when \eqref{torus} is defined on $\mathbb{K}^2$. Indeed, from statement $(b)$ of Proposition \ref{torusklein}, some conditions on the parameters of \eqref{torus} must be satisfied.
\end{remark}

\begin{proof}[Proof of Proposition \ref{torusklein}]
We know that each vector field $X_i$ preserves the measure $\al_i\cdot\lambda$, for $i=1,2,\ldots,n$. Applying Theorem \ref{Tvol} for each connected component $\Sigma_i$, $i=1,2,\ldots,n$, of the discontinuity manifold $\Sigma$ we get that $Z$ preserves the measure $\nu=f\cdot \lambda$ if, and only if,
\begin{equation}\label{linear}
\left\{\begin{array}{l}
0=b_i\al_i-b_{i+1}\al_{i+1},\,\,\,\text{for}\,\,\, i=1,2,\ldots,n-1,\\
0=b_n\al_n-b_1\al_1.
\end{array}\right.
\end{equation}
The last equality of system \eqref{linear} is due to the identification $[0,1]\times\{0\} \sim[0,1]\times\{1\}$.
Adding up the first $n-1$ equalities of \eqref{linear} we get $-b_n\al_n+b_1\al_1=0$, which is equivalent to the last equality of \eqref{linear}. Therefore, the system of linear equations \eqref{linear} admits non-trivial solutions. Solving it we conclude that $(\al_1,\al_2,\ldots,\al_n)=C\big(b_1^{-1},b_2^{-1},\ldots,b_n^{-1}\big)$, for some $C>0$. It concludes the proof of statement $(a)$.

When the vector field \eqref{torus} is defined on $\mathbb{K}^2$, due to the identification we are using, one can see that system \eqref{linear} is again a necessary condition for \eqref{torus} to preserve $\nu$, hence $(\al_1,\al_2,\ldots,\al_n)=C\big(b_1^{-1},b_2^{-1},\ldots,b_n^{-1}\big)$, for some $C>0$. Moreover, applying Theorem \ref{Tvol} regarding the set of discontinuity $\Sigma_0$, we see that $\al_i a_i=\al_{n-i-1}a_{n-i-1},$ which implies $a_i/b_i=a_{n-i-1}/b_{n-i-1}$, for $i\in\{0,1,\ldots,n/2-1\}$. We conclude the proof of statement $(b)$ by noticing that the finite saturation of an orbit passing, eventually, through the intersection between $\Sigma_0$ with $\Sigma_i,$ for some $i,$  has zero volume measure.\end{proof}

\subsection{The saturation of $\CN_Z$ is the whole $\mathbb{T}^2$} \label{subsec2}
As a trivial example of a piecewise smooth system such that $\CN_Z=\mathbb{T}^2$, we may consider the following piecewise constant vector field defined on the torus $\mathbb{T}^2=[0,1]^2/\sim$:

\begin{equation}\label{exemplo1}
Z(x,y)=\left\{
\begin{array}{cl}
\left(\!\!\begin{array}{c}
0\\
-1
\end{array}\!\!\right)& \text{if}\quad y\geq0,\vspace{0.2cm}\\
\left(\begin{array}{l}
 0\\
1
\end{array}\right)& \text{if}\quad y\geq0.
\end{array}\right.
\end{equation}
Indeed, for a given $p\subset M\setminus \Sigma$ its forward trajectory reach the sliding region, and its backward trajectory reach the escaping region. As performed in the introduction, this system does not admit invariant probability measures.

\subsection{The saturation of $\CN_Z$ is open and strictly contained in $\mathbb{T}^2$}\label{subsec3} 
When $\sat(\CN_Z)\neq \mathbb{T}^2$ is open we have that $K=M\setminus \sat(\CN_Z)$ is a compact set for which $Z_t\big|_K$ is a flow. In this case, we get the existence of an invariant probability measure, since a continuous flow on a compact set always admits an invariant measure. We extend this measure to the whole space $\mathbb{T}^2$ by giving zero measure for the set $sat(\CN_Z)$. As an example of that, we may consider the vector field defined on the torus $[0,\pi]\times[-3/2,3]/\sim$ (see Figure \ref{figexample3}):
\begin{equation}\label{exemplo2}
Z(x,y)=\left\{
\begin{array}{ll}
\left(\begin{array}{c}
 1\\
\left(y-\dfrac{5}{2}\right)\left(y-\dfrac{7}{2}\right)\left(\dfrac{3}{5}-\sin^2(x)\right)
\end{array}\right)& \text{if}\quad \dfrac{3}{2}\leq y\leq 3,\vspace{0.2cm}\\

\left(\begin{array}{c}
1\\
(y-2)(y-1)\left(-\dfrac{3}{5}+\sin^2(x)\right)
\end{array}\right)& \text{if}\quad 0<y<\dfrac{3}{2},\vspace{0.2cm}\\

\left(\begin{array}{c}
1\\
(y+2)(y+1)\left(\dfrac{3}{5}-\sin^2(x)\right)
\end{array}\right)& \text{if}\quad -\dfrac{3}{2}<y<0.\vspace{0.2cm}\\
\end{array}\right.
\end{equation}

\begin{figure}[h]
\begin{center}
\begin{overpic}[width=7.5cm]{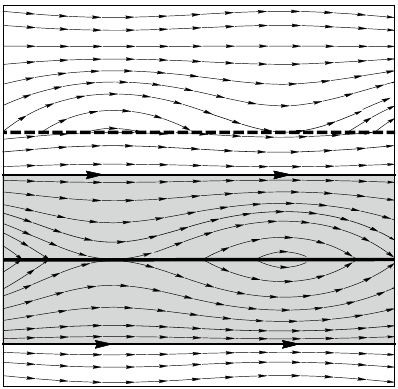}
\put(101,63){$\Sigma_2$}
\put(101,30){$\Sigma_1$}
\put(27,35){$c_1$}
\put(27,31){$\bullet$}
\put(70,35){$c_2$}
\put(70,31){$\bullet$}
\put(27,67.3){$c_3$}
\put(27,63.3){$\bullet$}
\put(70,67.3){$c_4$}
\put(70,63.3){$\bullet$}
\end{overpic}
\end{center}

\bigskip

\caption{Phase space of the piecewise smooth vector field \eqref{exemplo2} defined on the rectangle $[0,\pi]\times[-3/2,3]$. The shaded region indicates the set $\sat(\CN_Z)$.}\label{figexample3}
\end{figure}
The set of discontinuity is given by $\Sigma=\Sigma_1\cup\Sigma_2$, where $\Sigma_1=[0,\pi]\times\{0\}$ and $\Sigma_2=[0,\pi]\times\{3/2\}$. Note that the vector field is continuous on the lines $[0,\pi]\times\{-3/2\}$ and $[0,\pi]\times\{-3/2\}$. The contact between the vector field and the discontinuous manifold $\Sigma$ happens at the points 
\[
c_1=\left(0,\pi-\arcsin\left(\dfrac{3}{5}\right)\right),~ c_2=\left(0,\pi+\arcsin\left(\dfrac{3}{5}\right)\right),
\]
\[
c_3=\left(\dfrac{3}{2},\pi-\arcsin\left(\dfrac{3}{5}\right)\right),~ c_4=\left(\dfrac{3}{2},\pi+\arcsin\left(\dfrac{3}{5}\right)\right).
\]
Moreover, $\Sigma^{s,e}=\Sigma_1\setminus\{c_1,c_2\}$ and $\Sigma^c=\Sigma_2\setminus\{c_3,c_4\}$. The breaking of unicity occurs at the sliding and escaping sets and at the tangency $c_1$, so $\CN_Z=\Sigma^s\cup\Sigma^e\cup\{c_1\}$. Furthermore, it is easy to see that $\phi_1(t)=(t,1)$ and $\phi_2(t)=(t,-1)$ are limit cycles. After some simple computations we conclude that $\sat(\CN_Z)=\{(x,y)\in[0,\pi]\times[-3/2,3]:\, -1<y<1\}$, which is the open region delimited by the limit cycles $\phi_1$ and $\phi_2$.

\subsection{The saturation of $\CN_Z$ is closed and strictly contained in M}\label{subsec4} When $\sat(\CN_Z)\neq M$ is closed we may find examples for which there exist invariant probability measures as well as examples for which there are no invariant probability measures. 

Firstly we provide a piecewise smooth vector field $Z$ defined on the torus $\mathbb{T}^2=([-\pi/2,3\pi/2]\times[-3\pi/2,3\pi/2])/\sim$ (see Figure \ref{figexemple4})  for which $\sat(\CN_Z)$ is a closed set strictly contained in $\mathbb{T}^2$ and there exist invariant probability measures for $Z$. Then, we show how to perturb this system and make a surgery in $\mathbb{T}^2$ in order to obtain another system $\hat Z$ defined on a compact manifold $M$ for which $\sat(\CN_{\hat Z})$ is still a closed set strictly contained in $M$ but now $\hat Z$ does not admit invariant probability measures.

\begin{figure}[h]
\begin{center}
\begin{overpic}[width=8cm]{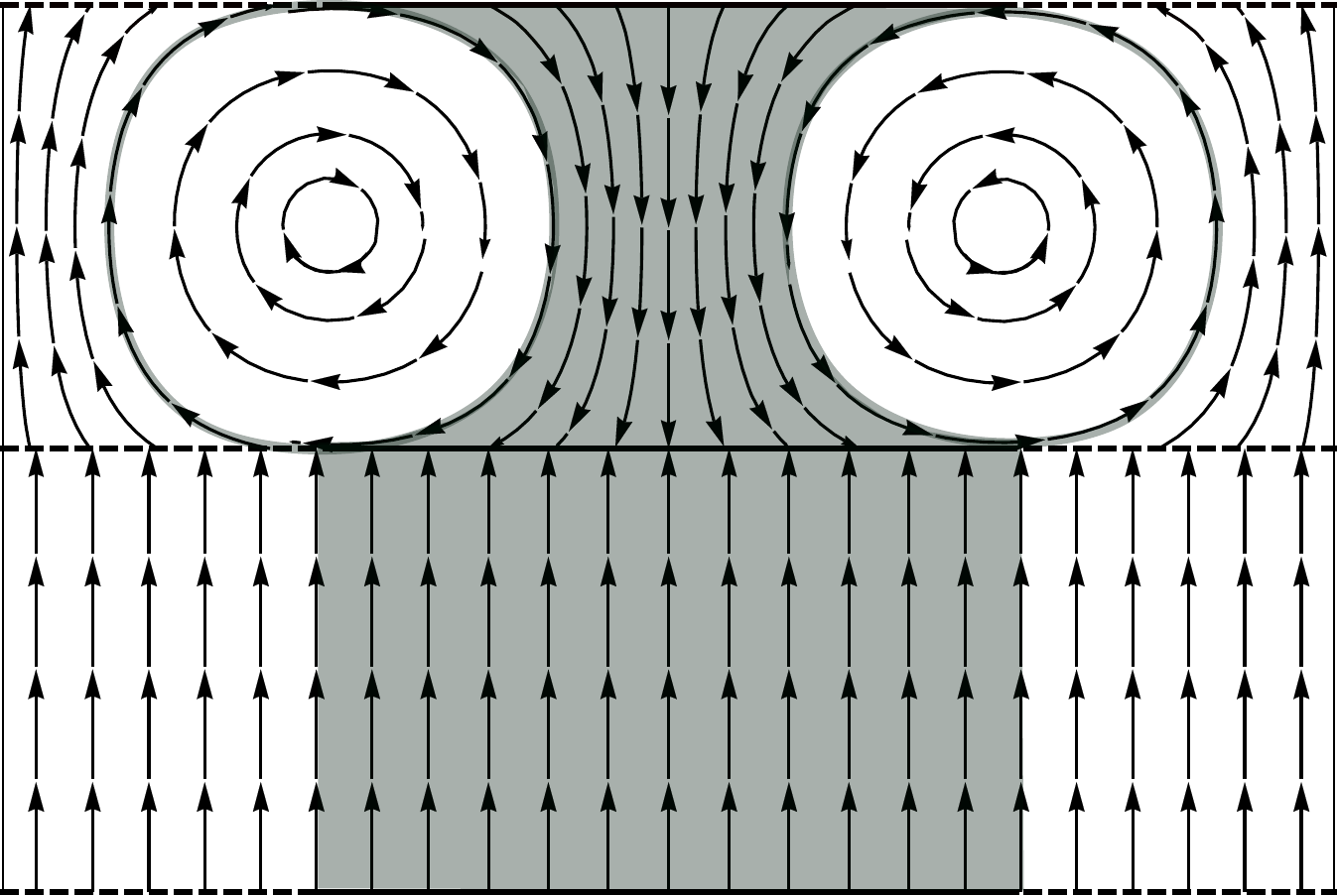}
\end{overpic}
\label{exemple4}
\end{center}

\bigskip

\caption{Phase space of the Filippov system \eqref{exemplo1} defined on the rectangle $[-\pi/2,3\pi/2]\times[-3\pi/2,3\pi/2]$. The shaded region indicates the closed set $\sat(\CN_Z)$.}\label{figexemple4}
\end{figure}

Consider the following piecewise smooth vector field defined on the torus  $([-\pi/2,3\pi/2]\times[-3\pi/2,3\pi/2])/\sim$:
\begin{equation}\label{exemplo3}
Z(x,y)=\left\{
\begin{array}{cl}
\left(\begin{array}{c}
\cos(x)(-\sqrt{3}\cos(y)+\sin(y))\\
-\sin(x)(\cos(y)+\sqrt{3}\sin(y))
\end{array}\right)& \text{if}\quad y\leq 0,\vspace{0.2cm}\\
\left(\begin{array}{l}
 0\\
1
\end{array}\right)& \text{if}\quad y\geq0.
\end{array}\right.
\end{equation}
We can easily prove that $M\setminus \sat(\CN_Z)$ is foliated by periodic orbits. On each periodic orbit in $M\setminus \sat(\CN_Z)$ we may consider the invariant equidistributed measure, hence we get the existence of invariant probability measures (once again we extended this measure to the whole space by attributing zero measure on the complement of the periodic orbit).

Now, the vector field \eqref{exemplo3} may be perturbed in order to get a system $\hat Z=(\hat F^+,\hat F^-)$ such that the following properties hold:
\begin{itemize}
\item[(i)] $\hat F^+$ has two stable limit cycles: a stable one $\gamma_1$, and an unstable one $\gamma_2$;
\item[(ii)] each limit cycle $\gamma_i$ is tangent to $\Sigma$ at two points, $c_i^1$ and $c_i^2$;
\item[(iii)] each limit cycle $\gamma_i$ encloses only one singularity $p_i$.
\item[(iv)] the $\alpha$ and $\omega$ limit sets of any point in $M\setminus \sat(\CN_Z)$ is contained in $\Gamma_2\cup\{p_1\}$ and $\Gamma_1\cup\{p_2\}$, respectively, where $\Gamma_i$ is the union of $\gamma_i$ with the arc-orbit of $\hat F^-$ connecting $c_i^1$ and $c_i^2$.
\end{itemize}

\begin{figure}[h]
\begin{center}
\begin{overpic}[width=8cm]{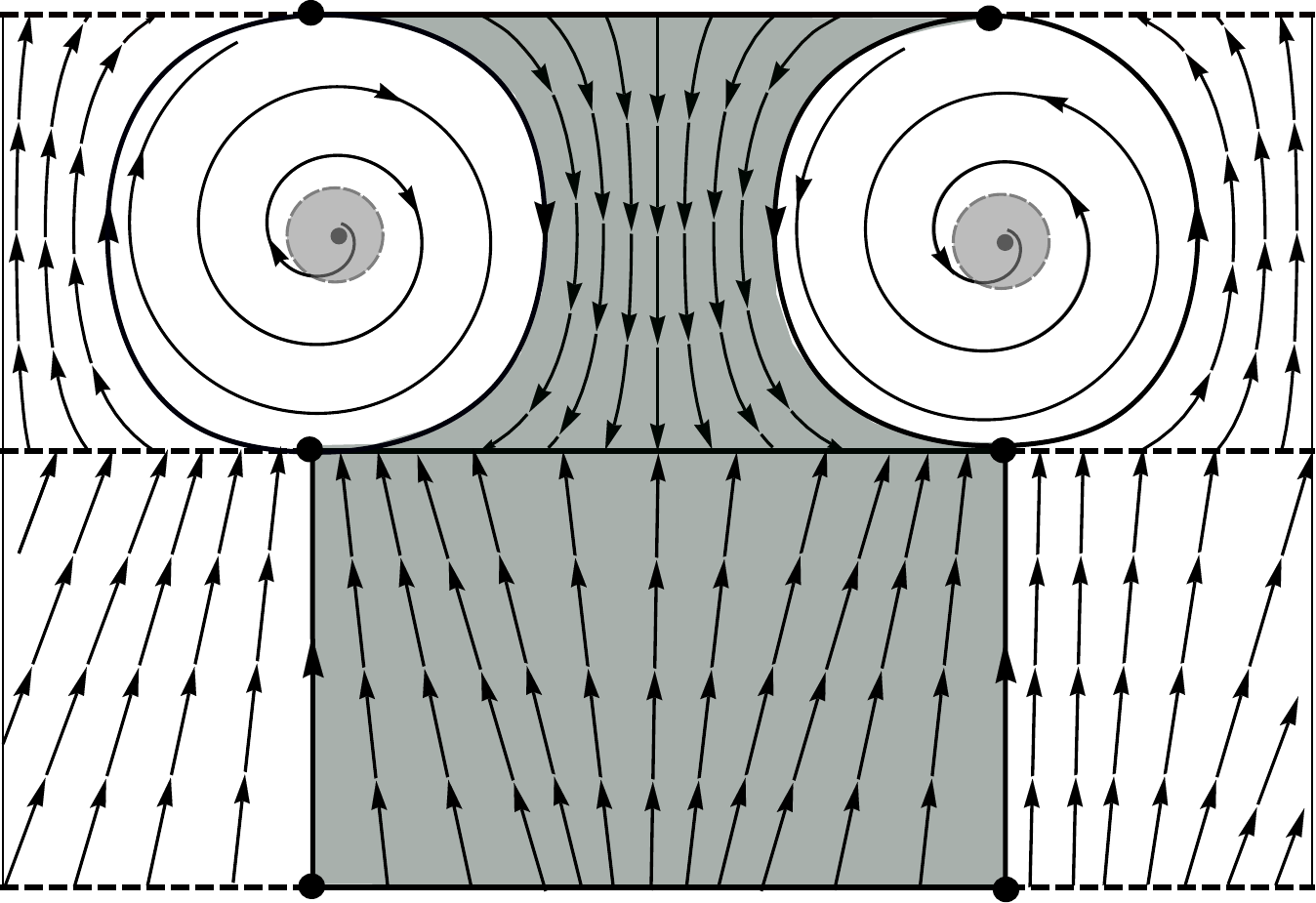}
\put(71,55){$D_2$}
\put(23,55){$D_1$}
\put(35.5,45){$\gamma_1$}
\put(60,45){$\gamma_2$}
\put(22,36.5){$c_1^1$}
\put(22,62.5){$c_1^2$}
\put(75,36.5){$c_2^1$}
\put(74,62.5){$c_2^2$}
\put(18.7,10){$\Gamma_1$}
\put(76.6,10){$\Gamma_2$}
\put(24,3){$c_1^2$}
\put(71,3){$c_2^2$}
\end{overpic}
\label{exemple5}
\end{center}

\bigskip

\caption{Phase space of a Filippov system $\hat Z$, given by a perturbation of \eqref{exemplo3}, satisfying (i)-(iv)}\label{figexemple5}
\end{figure}

Note that properties (i), (iii), and (iv) imply that $p_1$ is an unstable singularity and $p_2$ is a stable singularity. In this case, since $\gamma_i\subset\Gamma_i\subset\sat(\CN_{\hat Z}),$ the only possible invariant measures are the ones supported on the singularities $p_1$ and $p_2$. Now, we make the following surgery on $\mathbb{T}^2$: remove two small disks $D_1$ and $D_2$ centered at $p_1$ and $p_2$, respectively, and glue their boundary reversing orientation. Now, this gives a new compact manifold (non-orientable) $M$ (see Figure \ref{figexemple5}). From the stability of the singularities $p_1$ and $p_2$ we may conclude that the boundary of the disc where the surgery was performed, $\p D_1=\p D_2$, contains only crossing points. Hence, the Filippov system $\hat Z$ on this new manifold $M$ do not support any invariant measure.

\section*{Appendix: Invariant probability measures for Lipschitz differential inclusions}\label{sec:difs}

In addition to the definitions provided in Section \ref{sec:diff.inclusion},  we provide some extra definitions and a result we shall need for proving the main result of this appendix.

 Given a closed bounded interval $I\subset \R,$ with $0\in I,$ let $AC(I,\R^n)$ denote the set of all absolutely continuous function $\phi:I\rightarrow U$. For $x\in U$, denote by $\CS_{\CF}^I(x)$ the set of solutions of the differential inclusion \eqref{di} lying on $AC(I,\R^n)$ with initial condition $x.$ Notice that, $\CS_{\CF}^{I}(x)(s)=\CS_{\CF}(x)(s)$ for every $s\in I.$ Also, denote 
\[
\CS^I(\CF)=\bigcup_{x\in U}\CS^I_{\CF}(x)\subset AC(I,\R^n).
\]

In order to get some useful properties on the set-valued map $\CS^I_{\CF}:U\rightarrow AC(I,\R^n)$, some hypotheses on $\CF$ must be assumed:
\begin{itemize}
\item[(i)] $\CF(x)\subset \R^n$ is a {\it closed convex set} for every $x\in U$.

\item[(ii)] $\CF$ is {\it Lipschitizian} that is, there exists $L>0$ such that $\CF(x_1)\subset \CF(x_2)+L|x_1-x_2| B_1(0)$ for every $x_1,x_2\in U$, where $B_1(0)=\{y\in\R^n:\, |y|\leq 1\},$
\end{itemize}

Furthermore, a set-valued map $F:U\rightarrow Y$ ($Y$ topological space) is called  {\it upper semi-continuous} at $x_0\in X$ if for any open subset $W$ of $Y$ containing $F(x_0)$ there exists a neighborhood $V\subset U$ of $x_0$ such that $F(V)\subset W.$

\begin{theorem}[{\cite[Theorem 4.12]{S}}]\label{T.prel}
Assume that $\CF:U\rightarrow\R^n$ is a Lipschitizian set-valued map with closed convex values. Then, the set valued map $\CS^I_{\CF}:U\rightarrow AC(I,\R^n)$ is also Lipschitizian, in particular it is upper semi-continuous. Moreover, given $\phi_0\in\CS^I_{\CF}(x_0)$ there exists a continuous function $\Phi:U\rightarrow AC(I,\R^n)$ satisfying $\Phi(x)\in\CS^I_{\CF}(x)$ and $\Phi(x_0)=\phi_0$.
\end{theorem}

The next result implies that the Conjecture stated in the introduction holds for Lipschitizian differential inclusion.

\begin{main}\label{Tprob.di}
Suppose that $\CF:U\rightarrow\R^n$ is a Lipschitizian set-valued map with closed convex values. If the differential inclusion \eqref{di} admits an invariant probability measure $\mu$, then there exists an open set $A\subset U$ such that $\sat(\CN_{\CF})\subset A$ and $\mu(A) = 0$.
\end{main}

\begin{proof}
Assuming the existence of an invariant probability measure $\mu$ we shall prove that, for each $x_0\in\sat(\CN_{\CF})$, there exists a small neighborhood $V_{x_0}\subset U$ such that $\mu(V_{x_0})=0$. 

First assume that $x_0\in\CN_{\CF}$. Then, there exists $\phi_1,\phi_2\in\CS_{\CF}(x_0)$ and $\ov t\neq0$ such that $y_1=\phi_1(\ov t)\neq\phi_2(\ov t)=y_2$. Notice that, for $I=[0,\ov t],$ the restrictions $\phi_1\big|_I,\phi_2\big|_I\in$ belongs to $\CS^I_{\CF}(x_0).$ Applying Theorem \ref{T.prel} we get the existence of continuous functions $\Phi_1,\Phi_2:U\rightarrow \CS^I(\CF)$ such that $\Phi_i(x)\in\CS^I_{\CF}(x)$ and $\Phi_i(x_0)=\phi_i$ for $i\in\{1,2\}$. Therefore, we can find a small neighborhood $V_{x_0}\subset U$ of $x_0$ such that $\Phi_1(V_{x_0})(\ov t)\cap \Phi_2(V_{x_0})(\ov t)=\emptyset$. Denote $V_i=\Phi_i(V_{x_0})(\ov t)$. Since $V_i\subset \CS_{\CF}(V_{x_0})(\ov t)$ we have that 
\begin{equation}\label{ine1}
\mu(V_1)+\mu(V_2)\leq\mu\left(\CS_{\CF}(V_{x_0})(\ov t)\right)=\mu(V_{x_0}).
\end{equation}
Nevertheless, $V_{x_0}\subset \CS_{\CF}(V_i)(-\ov t)$ for $i=1,2$. Indeed, let $x\in V_{x_0}$, so $v_i=\Phi_i(x)(\ov t)\in V_i$ and $\psi_i(t)=\Phi_i(x)(t+\ov t)\in\CS_{\CF}(v_i),$ which implies that $x=\psi_i(-\ov t)\in  \CS_{\CF}(V_i)(-\ov t)$. Hence
\begin{equation}\label{ine2}
\mu(V_{x_0})\leq  \mu\big(\CS_{\CF}(V_i)(-\ov t)\big)=\mu(V_i),\quad \text{for}\quad i\in\{1,2\}.
\end{equation}
From \eqref{ine1} and \eqref{ine2}, we conclude that $\mu(V_{x_0})=0$.

Now, assume that $x_0\in\sat(\CN_{\CF})\setminus \CN_{\CF}$. In this case, there exist $y_0\in\CN_{\CF}$, $\phi_0\in\CS_{\CF}(y_0),$ and $t_0\neq0$ such that $\phi_0(t_0)=x_0$. Since $x_0\not\in\CN_{\CF}$ we have that $\CS_{\CF}(x_0)(t)=\{\psi_0(t)=\phi_0(t+t_0)\}$. From the first part of the proof, there exists a  neighborhood $V_{y_0}\subset U$ of $y_0$ such that $\mu(V_{y_0})=0$. Now, take $I_0=[0,t_0].$ Since $\psi_0(-t_0)=y_0,$ there exists a small neighborhood $\CV\subset AC(I_0,\R^n)$ of $\CS_{\CF}(x_0)(t)=\{\psi_0(t)\}$ such that $\phi(t_0)\in V_{y_0}$ for every $\phi\in\CV$. Now, since $\CS^{I_0}_{\CF}$ is upper semi-continuous at $x_0$ there exists a neighborhood $V_{x_0}\subset U$ of $x_0$ such that $\CS^{I_0}_{\CF}(V_{x_0})\subset \CV$, which implies that  $\CS_{\CF}(V_{x_0})(-t_0)\subset V_{y_0}$. Hence, 
\[
0\leq\mu(V_{x_0})=\mu\big(\CS_{\CF}(-t_0)\big)\leq\mu(V_{y_0})=0.
\]

Finally, take A as the following open subset of $U$:
\[
A=\bigcup \{V_x:\, x\in\sat(\CN_{\CF})\}.
\]
We conclude the proof of Theorem \ref{Tprob.di} by noticing that $\mu(A)=0$. Indeed, otherwise we would be able to find a point $\ov x\in A\cap\supp(\mu)$, which is an absurd because $\ov x\in V_{x_0}$ for some $x_0\in\sat(\CN_{\CF})$ and $\mu(V_{x_0})=0$.
\end{proof}

\section*{Acknowledgements}

We thank Marco Antonio Teixeira for his comments on a preliminary version of this article. DDN was partially supported by São Paulo Research Foundation (FAPESP) grants 2018/16430-8, 2018/13481-0, and 2019/10269-3, and by National Council for Scientific and Technological Development (CNPq) grants 401109/2016-0, 306649/2018-7, and 438975/2018-9. RV was partially supported by São Paulo Research Foundation (FAPESP) grants 2016/22475-9, 2017/06463-3, and 2018/13481-0 and by National Council for Scientific and Technological Development (CNPq) grant 401109/2016-0.

\bibliographystyle{abbrv}
\bibliography{bibliografiaNovVar2018.bib}

\end{document}